\newcommand{\Date}{\today}
\ifpdf \usepackage{xcolor} \input dvipsnam.def \else
\fi \usepackage{graphicx}
\definecolor{linkcolor}{named}{Maroon}
\definecolor{citecolor}{named}{PineGreen}
\definecolor{urlcolor}{named}{RoyalPurple}
\definecolor{okcolor}{named}{OliveGreen}
\definecolor{alertcolor}{named}{BrickRed}
\ifpdf \DeclareGraphicsExtensions{.png,.jpg}%
\else \DeclareGraphicsExtensions{.eps,.ps,.png,.jpg}%
\theoremstyle{plain} \newtheorem{thm}{Theorem}
\newtheorem{lem}[thm]{Lemma} 
\theoremstyle{definition} \newtheorem{defn}[thm]{Definition}
\theoremstyle{remark}
\newcommand{\Ex}[1]{\operatorname{\mathbb{E}}\left[#1\right]}
\newcommand{\prob}[1]{\operatorname{\mathbb{P}}\left(#1\right)}
\newcommand{\Var}[1]{\operatorname{Var}\left(#1\right)}
\newcommand{\bra}[1]{\left(#1\right)}
\newcommand{\NN}{\mathbb N} \newcommand{\ZZ}{\mathbb Z}
 \newcommand{\set}[1]{\left\{#1\right\}}
\newcommand{\squ}[1]{\left[#1\right]} 
\newcommand{\abs}[1]{\vert#1\vert}
\newcommand{\norm}[1]{\Vert#1\Vert}
\newcommand{\indev}[1]{\mathbf{1}_{\squ{#1}}}
\newcommand{\eps}{\varepsilon} \newcommand{\degree}{\mathrm{deg}}
\newcommand{\tr}{\mathrm{tr}} \newcommand{\sgn}[1]{\textrm{sgn}(#1)}
\renewcommand{\Re}{\mathrm{Re}}
\newcommand{\mebfussy}[1]{#1'}
\newcommand{\tv}[1]{\norm{#1}}
\newcommand{\tmix}[1]{t^{#1}_{\text{mix}}}
\begin{document}

 \title{Mixing Time and Cutoff for a Random Walk on the \\Ring of Integers mod
   $n$}
 \author{%
   \href{http://maths.york.ac.uk/www/meb505}{Michael E. Bate} \quad
   and\quad \href{http://maths.york.ac.uk/www/sbc502}{Stephen
     B. Connor}} \date{\Date}

 \maketitle

 \begin{abstract}\noindent
   We analyse a random walk on the ring of integers mod $n$, which at
   each time point can make an additive `step' or a multiplicative
   `jump'. When the probability of making a jump tends to zero as an
   appropriate power of $n$ we prove the existence of a total
   variation pre-cutoff for this walk. In addition, we show that
   the process obtained by subsampling our walk at jump times exhibits
   a true cutoff, with mixing time dependent on
   whether the step distribution has zero mean.

   \medskip
   \noindent{\bf Keywords:} random walk; mixing time; cutoff
   phenomenon; pre-cutoff; group representation theory; integers mod $n$; random
   number generation

   \medskip
   \noindent 2010 Mathematics Subject Classification: \\
   Primary 60J10
 \end{abstract}


 \section{Introduction}\label{sec:introduction}

 In this note we consider a random walk $X=X^{(n)}$ on $\ZZ_n =
 \ZZ/n\ZZ$ (where $n$ is \emph{odd}) defined as follows: $X_0=0$, and
 for $t\geq 1$,
 \begin{equation}\label{eqn:process_defn}
   X_t = \begin{cases}
     X_{t-1}+\mebfussy\xi_t \mod n & \text{with probability $1-p_n$} \\
     2X_{t-1} \mod n & \text{with probability $p_n$,}
   \end{cases}
 \end{equation}
 where $\{\mebfussy\xi_t\}$ are a set of i.i.d. random variables with
 finite support $B\subset\ZZ$, whose distribution does not vary with
 $n$. We denote the mean and variance of $\mebfussy\xi$ by $\mu$ and
 $\sigma^2$ respectively. We will refer to an `addition' move as a
 `step', and to a `multiplication' move as a `jump'. To ensure that
 $X$ is irreducible we assume that the group $\langle B_n,+ \rangle$
 is not a proper subgroup of $\ZZ_n$ for any odd $n$, where $B_n =
 \set{z \mod n:z\in B}$. Furthermore, since $n$ is odd, multiplication
 by 2 is an invertible operation, and thus $X$ is ergodic with uniform
 equilibrium distribution $\pi_n$ on $\ZZ_n$.

 Define the total variation distance from $\pi_n$ of a probability
 distribution $P$ on $\ZZ_n$ by
 \[ \tv{P-\pi_n} = \max_{A\subset \ZZ_n}\abs{P(A) - \pi_n(A)} =
 \frac{1}{2} \sum_{s\in\ZZ_n}\abs{P(s) - 1/n} \,. \] The
 $\eps$\emph{-mixing time} of $X$ is defined for any $\eps\in[0,1]$ as
 \[ \tmix{}(\eps) = \min\{t\,:\,\tv{\mathbb
   P(X_t\in\cdot)-\pi_n(\cdot)}\leq \eps\}\,, \]
 and the value of
 $\tmix{}(1/4)$ is commonly referred to as the `mixing time' of $X$.

 A number of authors have previously considered random processes of
 the form
 \[ X_t = a_t X_{t-1} + b_t \mod n; \] these processes are similar to
 schemes used for random number generation, a link which has naturally
 motivated interest in bounding their mixing times. A nice
 introduction to the area can be found in \citet[Chapter
 6]{Terras1999}. The earliest such work appears to be that of
 \citet{Chung1987}, in which $a_t=a=2$ and $b_t$ is chosen uniformly
 from $\{-1,0,1\}$: they show that $O(\log n \log\log n)$ steps
 suffice for this walk to mix, and that $O(\log n\log\log n)$ steps
 are also necessary for $n$ of the form $2^m-1$; on the other hand,
 for almost all odd $n$, $1.02\log_2 n$ steps suffice. This
 (deterministic) act of doubling each time causes the process to mix
 significantly faster than when $a_t=1$ for all $t$ where, if $b_t$ is
 uniform on a finite set (and assuming that the resulting process is
 irreducible), the mixing time is of order $n^2$
 \citep{Diaconis1988,Saloff-Coste2004}.

 Rather more general results have been established in a series of
 works by Hildebrand. It is shown in his thesis \citep[Chapter
 3]{Hildebrand1992} that if $a_t=a$ for all $t$, and for fairly
 general choices of $b_t$ (which don't depend on $n$), $O(\log n
 \log\log n)$ steps suffice, and in fact for almost all $n$, $O(\log
 n)$ steps suffice.
 When $a_t$ is allowed to vary with $t$, a
 general upper bound for the mixing time is proved in
 \citet{Hildebrand1993}: using a recursive relation involving discrete
 Fourier transforms (of which more below), he shows that (unless
 $a_t=1$ always, $b_t=0$ always, or $a_t$ and $b_t$ can each take on
 only one value) $O((\log n)^2)$ time steps are always
 sufficient. Other related results can be found in
 \citet{Hildebrand1994, Hildebrand1994b}.

 A particularly interesting feature of these processes is the
 quantitatively different behaviour that can be obtained by making
 small changes to the distribution of $a_t$ and $b_t$. For example,
 \citet{Chung1987} remark upon the following curiosity to be found
 when $a_t=2$ and $b_t$ is supported on $\{-1,0,1\}$ with
 $\prob{b_t=1} = \prob{b_t=-1} = q$: if $q=1/4$ or $q=1/2$ then
 $O(\log n)$ steps suffice to make the total variation distance small;
 however, if $q=1/3$ then $O(\log n\log\log n)$ steps may be
 required. Similarly, \citet[Chapter 5]{Hildebrand1992} considers the
 situation where $b_t$ is uniform on $\pm 1$ and $a_t$ is supported on
 $\{2,(n+1)/2\}$, with $\prob{a_t=2} = p\in(0,1)$: the mixing time is
 shown to be at most $O((\log n)^m)$, where $m$ is 2 if $p=1/2$, and 1
 otherwise. If the distribution of $b_t$ is altered to uniform on
 $\{-1,0,1\}$ then $O((\log n\log\log n)^m)$ steps suffice.

 The principal difference between these earlier works and the process
 defined in \eqref{eqn:process_defn} is that here we allow the
 probability of a `jump', $p_n$, to depend on $n$. In particular, we
 are able to show that if $p_n$ tends to zero as a power of $n$, then
 our process exhibits a total variation \emph{pre-cutoff}.

 \begin{defn}
   \label{def:pre-cutoff}
   A sequence of chains $\{X^{(n)}\}_{n\in \NN}$, with $\eps$-mixing
   times $\{\tmix{(n)}(\eps)\}$, is said to exhibit a
   \emph{pre-cutoff} if it satisfies
   \[
   \sup_{0<\eps<1/2} \limsup_{n\to\infty}
   \frac{\tmix{(n)}(\eps)}{\tmix{(n)}(1-\eps)} < \infty \,.
   \]
 \end{defn}
 A stronger condition than pre-cutoff is that the sequence of chains
 exhibits a total variation \emph{cutoff}.

 \begin{defn}
   \label{def:cutoff}
   A sequence of Markov chains $\{X^{(n)}\}_{n\in \NN}$ is said to
   exhibit a \emph{total variation cutoff} at time $T_n$ with
   \emph{window size} $w_n$ if $w_n = o(T_n)$ and
   \begin{align*}
     \lim_{c\to\infty} \liminf_{n\to\infty}\tv{\mathbb
       P(X^{(n)}_{T_n-cw_n}\in\cdot) - \pi_n(\cdot)} &= 1\\
     \lim_{c\to\infty} \limsup_{n\to\infty}\tv{\mathbb
       P(X^{(n)}_{T_n+cw_n}\in\cdot) - \pi_n(\cdot)} &= 0 \,.
   \end{align*}
 \end{defn}
 Intuitively this says that as $n$ gets large the convergence to
 equilibrium, measured using total variation distance, happens in a
 negligible window of order $w_n$ around the cutoff time $T_n$. We
 remark that it is possible for the `right' and `left' window sizes in
 the above definition to be of different orders -- see
 \mbox{\citet{Connor-2010}} for an example. There has been much
 interest in studying the mixing times of Markov chains and proving
 the existence of cutoff phenomena: see \mbox{\citet{Levin2009}} and
 \mbox{\citet{Diaconis2011}} for recent introductions to the area, or
 \mbox{\citet{Saloff-Coste2004}} for a more analytical overview. In
 addition a number of natural sequences of Markov chains are known to
 exhibit pre-cutoff, with the question of whether they in fact exhibit
 a cutoff still open; these include card shuffles using
 cyclic-to-random transpositions \citep{Mossel2004} or
 random-to-random insertions \citep{Subag2013}, and a Gibbs sampler on
 the $n$-simplex \citep{Smith2014}.

 Throughout the remainder of this paper we shall simply write $X$ for
 $X^{(n)}$, with the understanding that we are in reality considering
 a sequence of processes on state spaces $\ZZ_n$ of increasing
 size. The main obstruction to analysing $X$ using standard techniques
 for random walks on groups is that the distribution of $X_k$ is not
 given by convolution of $k$ independent increment distributions. This
 problem can be overcome by (initially) restricting attention to the
 process $Y$ which is produced by subsampling $X$ at jump
 times. Denote the jump times of $X$ by $\tau_1,\tau_2,\dots$, and let
 $\tau_0=0$; then $Y_k := X_{\tau_k}$, with $Y_0 = X_0=0$. This
 process clearly satisfies $Y_k = \mebfussy Y_k \mod n$, where
 \begin{equation}\label{eqn:Y_k_and_tilde_S}
   \mebfussy Y_k =  \sum_{i=1}^k 2^{k+1-i}\mebfussy S_i  \quad\text{and}\quad
   \mebfussy S_i = \sum_{t=\tau_{i-1}+1}^{\tau_i-1}\mebfussy\xi_t\,.
 \end{equation}
 Here (and throughout) we use the convention that random variables
 with a prime take values in $\ZZ$, while those without take values in
 $\ZZ_n$. Thus $S_i=\mebfussy S_i \mod n$ is the change in $X$ due to
 steps taken between jump times $\tau_{i-1}$ and $\tau_i$. Like $X$,
 $Y$ is ergodic with uniform equilibrium distribution. From
 \eqref{eqn:Y_k_and_tilde_S}
 it is clear that the distribution of $Y_k$ is given by convolution of
 the distributions corresponding to the independent increments
 $\{2^{k+1-i}S_i\}$, and this will prove essential to our method for
 establishing an upper bound on the mixing time of both $X$ and $Y$ in
 Section~\ref{sec:upper bound}.

 In order to state our main results, we first need to establish a
 little more notation: we shall write $\sigma^2_{\mebfussy S}=
 \Var{\mebfussy S_i}$ and $T_n = \log_2(n/\sigma_{S'})$. Note that the
 length of time between jumps of $X$ has a $\text{Geometric}(p_n)$
 distribution,
 and a straightforward application of the conditional variance formula
 shows that
 \begin{equation}\label{eqn:var_S}
   \sigma^2_{\mebfussy S}=
   \frac{(1-p_n)(\mu^2+p_n\sigma^2)}{p_n^2} \,.
 \end{equation}
 Thus if $p_n\to0$,
 \begin{equation}\label{eqn:T_n-behaviour}
   T_n\sim\begin{cases}
     \log_2\left(\frac{n\sqrt{p_n}}{\sigma}\right) & \text{if $\mu=0$} \\
     \log_2\left(\frac{n{p_n}}{|\mu|}\right) & \text{otherwise.}
   \end{cases}
 \end{equation}

 Our main results are as follows.

\begin{thm}\label{thm:Y_cutoff}
  Suppose that $p_n = 1/(2n^\alpha)$ for some $\alpha\in(0,2)$ such
  that $T_n\to\infty$ as $n\to\infty$. Then $Y$ exhibits a total
  variation cutoff at time $T_n$, with cutoff window of size
  $O(1)$. Indeed, for sufficiently large $c>0$,
  \begin{enumerate}
  \item $\liminf_{n\to\infty}\tv{\prob{Y_{T_n-c}\in\cdot} -
      \pi_n(\cdot)} \geq 1-4^{1-c/3}$;
  \item $\limsup_{n\to\infty}\tv {\prob{Y_{T_n+c}\in\cdot} -
      \pi_n(\cdot)} \leq O(2^{-c})$.
  \end{enumerate}
\end{thm}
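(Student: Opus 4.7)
Since $Y_k = \sum_{i=1}^{k} 2^{k+1-i}S_i \mod n$ is a convolution of independent $\ZZ_n$-valued increments, Fourier analysis on $\ZZ_n$ is the natural tool. Let
\[
\phi_{\mebfussy S}(x) = \Ex{e^{2\pi i x\mebfussy S_1}} = \frac{p_n}{1-(1-p_n)\phi_\xi(x)},
\]
the closed form following from $\mebfussy S_1$ being a geometric random sum of $\mebfussy\xi_t$'s. Independence of the $S_i$ gives $\hat P_{Y_k}(\theta) = \prod_{j=1}^{k}\phi_{\mebfussy S}(2^{j}\theta/n)$ for $\theta\in\{1,\dots,n-1\}$, so the two halves of the theorem become (i) showing this Fourier transform is near $1$ for $k = T_n - c$ and (ii) summing its squared modulus for $k = T_n + c$.

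\textbf{Part 1 via Chebyshev.} Direct computation gives
\[
\Var{\mebfussy Y_k} \;=\; \sigma^2_{\mebfussy S}\sum_{j=1}^{k}4^{j} \;\leq\; \tfrac{4}{3}\,4^{k}\sigma^2_{\mebfussy S},
\]
so at $k=T_n-c$ the standard deviation is at most $(2/\sqrt 3)\,n\,2^{-c}$. Chebyshev's inequality now places $\mebfussy Y_k$ in an interval of length $(4\lambda/\sqrt 3)\,n\,2^{-c}$ about its mean with probability at least $1-\lambda^{-2}$, and the image of that interval under reduction modulo $n$ has $\pi_n$-measure at most $(4\lambda/\sqrt 3)\,2^{-c}+1/n$. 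Hence
\[
\tv{\prob{Y_k\in\cdot}-\pi_n} \;\geq\; 1-\lambda^{-2}-\tfrac{4\lambda}{\sqrt 3}\,2^{-c}-\tfrac{1}{n}.
\]
Letting $n\to\infty$ and taking $\lambda=2^{c/3}$ gives the $\liminf$ bound $1-(1+4/\sqrt 3)\cdot 4^{-c/3}\geq 1-4^{1-c/3}$, since $1+4/\sqrt 3 < 4$.

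\textbf{Part 2 via the $L^2$ upper bound lemma.} Plancherel and Cauchy--Schwarz give
\[
4\tv{\prob{Y_k\in\cdot}-\pi_n}^2 \;\leq\; \sum_{\theta=1}^{n-1}\prod_{j=1}^{k}\abs{\phi_{\mebfussy S}(2^{j}\theta/n)}^2.
\]
Two ingredients are needed. First, a \emph{pointwise} decay estimate: from the closed form of $\phi_{\mebfussy S}$ together with a quadratic lower bound $1-\Re\phi_\xi(a/n)\geq c_0\min(1,(\|a\|_n/n)^2)$ (valid because $B_n$ generates $\ZZ_n$, where $\|a\|_n=\min(a,n-a)$), one derives
\[
\abs{\phi_{\mebfussy S}(a/n)}^2 \;\leq\; \frac{1}{1+c_1\sigma^2_{\mebfussy S}\,\|a\|_n^2/n^2}, \qquad a\in\ZZ_n\setminus\{0\}.
\]
Second, an \emph{orbit sum}: because $n$ is odd, $\theta\mapsto 2\theta\mod n$ is a bijection of $\ZZ_n$, so the doubling orbit $\{2^{j}\theta\mod n\}_{j=1}^{k}$ is generically spread out. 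Partitioning $\theta$ dyadically by the scale of $\|\theta\|_n$ (equivalently, by the first $j$ at which $\|2^{j}\theta\|_n$ exceeds $n/\sigma_{\mebfussy S}$) reduces the Fourier sum to a geometric series in the parameter $2^{k}\sigma_{\mebfussy S}/n = 2^{c}$, yielding a bound of order $4^{-c}$ and hence $\tv{\cdot}=O(2^{-c})$.

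\textbf{Main obstacle.} The technical heart of the argument is the orbit sum in Part 2: one must rule out pathological $\theta$ whose doubling orbit revisits a neighbourhood of $0\in\ZZ_n$ for many of the $k$ iterations, since each such $j$ contributes a factor close to $1$ in the product. This requires combining the precise asymptotics of $\phi_{\mebfussy S}$ near $0$ --- which, per \eqref{eqn:T_n-behaviour}, behave qualitatively differently in the $\mu=0$ and $\mu\neq 0$ regimes --- with combinatorial control on the sizes of doubling orbits in $\ZZ_n$, in the spirit of the Fourier bounds developed by Chung--Diaconis--Graham and Hildebrand. By contrast, the lower bound in Part 1 is a one-line Chebyshev estimate once the variance $\sigma^2_{\mebfussy S}$ is in hand.
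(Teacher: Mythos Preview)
Your lower bound (Part~1) is correct and is essentially the paper's argument: both compute $\Var{Y'_{T_n-c}} \le \tfrac{4}{3}\,4^{-c}n^2$ and apply Chebyshev to exhibit a set of large $\pi_n$-measure that $Y_{T_n-c}$ is unlikely to hit, then optimise the free parameter to obtain the $4^{1-c/3}$ constant.

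Your upper bound (Part~2) has a real gap. The pointwise estimate
\[
1-\Re\phi_\xi(a/n)\;\ge\; c_0\min\bigl(1,(\|a\|_n/n)^2\bigr)
\]
is \emph{false} under the paper's hypotheses. The standing assumption is only that $B_n$ generates $(\ZZ_n,+)$ for every odd $n$, which forces $\gcd(B)$ to be a power of~$2$ but allows $\gcd(B)>1$; for instance $B=\{0,2\}$ is admissible. In that case $\Re\phi_\xi(x)=1$ whenever $2x\in\ZZ$, so for $a=(n\pm1)/2$ one has $1-\Re\phi_\xi(a/n)=O(n^{-2})$ while $\|a\|_n/n\approx 1/2$. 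Your derived bound $|\phi_{S'}(a/n)|^2\le 1/(1+c_1\sigma_{S'}^2\|a\|_n^2/n^2)$ therefore breaks down near a finite set of dyadic rationals in $(0,1)$, not only near $0$, and any $\theta$ whose doubling orbit lingers near one of these points contributes factors close to $1$ that your scheme does not see. Your closing paragraph frames the obstacle as orbits revisiting a neighbourhood of $0$; the genuine obstacle is orbits visiting neighbourhoods of \emph{any} of these dyadic bad points.

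The paper's route is rather different from the Chung--Diaconis--Graham/Hildebrand orbit combinatorics you invoke. It fixes a finite lattice $\mathcal L=\{k/2^{L+b}:0\le k\le 2^{L+b}\}$, with $B\subseteq[-2^b,2^b]$ and $L$ chosen so that $2\alpha L>1$, which contains every $x\in(0,1)$ at which $\Re\phi_\xi(2^j x)=1$ for some $j\le L$. For $x=\theta/n$ outside an $\eps$-neighbourhood $\mathcal L_\eps$, the first $L$ factors in the product each satisfy $\phi_n=O(p_n^2)$, so the total contribution is $O(n^{1-2\alpha L})\to 0$ --- no orbit analysis is needed here at all. For $x\in\mathcal L_\eps$, the paper discards all but the \emph{single} factor at $j=T_n+c$: writing $x=k/2^{L+b}-r/(n2^{L+b})$, the integer multiple $2^{T_n+c}k/2^{L+b}$ drops out (since $T_n\to\infty$), and a Taylor expansion of $G_{\xi'}$ at the resulting small angle $\theta_{nrc}\asymp 2^c r/\sigma_{S'}$ yields $\phi_n\sim 1/(1+4^{1+c-(L+b)}\pi^2 r^2)$; summing over $r$ via the identity $\coth x = x^{-1}+2x\sum_{r\ge 1}(x^2+\pi^2 r^2)^{-1}$ gives $O(4^{-c})$. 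The dyadic orbit partition you propose, even if it could be made precise, would still need a correct replacement for the failed pointwise bound, and that replacement is exactly the lattice splitting.
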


Note that the mixing time of $Y$ is relatively
insensitive to the distribution of the step lengths: as can be seen from \eqref{eqn:T_n-behaviour}, the
cutoff time $T_n$ essentially depends on $\mebfussy\xi$ only through
its mean, $\mu$; in the case of zero drift the mixing time is
asymptotically $(1-\alpha/2)\log_2 n$ ($0<\alpha<2$), while if
$\mu\neq 0$ the mixing is slightly faster, with cutoff at $(1-\alpha)
\log_2 n$ ($0<\alpha<1$).

\begin{thm}\label{thm:main_result} 
  Suppose that $p_n$ satisfies the same condition as in
  Theorem~\ref{thm:Y_cutoff}. Let $T_n^L = (2\ln 2) n^\alpha T_n$,
  $T_n^R = 2n^\alpha T_n$, $w_n^L = 2n^\alpha$ and $w_n^R =
  2n^\alpha\sqrt{T_n}$.  Then for sufficiently large $c>0$,
  \begin{enumerate}
  \item
    $\liminf_{n\to\infty}\tv{\mathbb{P}(X_{T_n^L-cw_n^L}\in\cdot)-\pi_n(\cdot)}\geq
    1-ae^{-c/2}$, for some finite constant $a$;
  \item
    $\limsup_{n\to\infty}\tv{\mathbb{P}(X_{T_n^R+cw_n^R}\in\cdot)-\pi_n(\cdot)}\leq
    2/c^2$.
  \end{enumerate}
\end{thm}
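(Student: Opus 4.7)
Both bounds hinge on the decomposition
\[ X_t \equiv Y_{N_t} + R_t \mod n, \qquad R_t = \sum_{s=\tau_{N_t}+1}^t \mebfussy\xi_s, \]
where $N_t \sim \distname{Bin}(t,p_n)$ is the number of jumps by time $t$, with $\Ex{N_t} = tp_n$ and $\Var{N_t} = tp_n(1-p_n)$. Conditional on $N_t$ and $\tau_{N_t}$, the subsampled value $Y_{N_t}$ and the residual $R_t$ depend on disjoint sets of i.i.d.\ step variables and are therefore independent; hence $R_t$ acts on $Y_{N_t}$ as an independent random shift modulo $n$, which cannot increase the total-variation distance from $\pi_n$.

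For the upper bound in part~(2), I combine this observation with Theorem~\ref{thm:Y_cutoff}. At $t=T_n^R+cw_n^R$ one has $\Ex{N_t}=T_n+c\sqrt{T_n}$ and $\Var{N_t}\asymp T_n$. Setting $k^\star = T_n + (c/\sqrt{2})\sqrt{T_n}$, Chebyshev yields $\prob{N_t<k^\star}\leq 2/c^2$, while on $\{N_t\geq k^\star\}$ the chain $Y_{N_t}$ has overshot the $Y$-cutoff by $(c/\sqrt{2})\sqrt{T_n}\to\infty$ jumps, so Theorem~\ref{thm:Y_cutoff} forces $\tv{\prob{Y_{N_t}\in\cdot\mid N_t}-\pi_n}=o(1)$. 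Conditioning on $N_t$ and the jump times and then averaging produces $\tv{\prob{X_t\in\cdot}-\pi_n}\leq 2/c^2 + o(1)$, and the stated bound follows for $c$ large.

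For the lower bound in part~(1), I use the equivalent backward representation
\[ \mebfussy X_t = \sum_{s=1}^t (1-H_s)\,2^{J_s^t}\,\mebfussy\xi_s, \qquad J_s^t = \sum_{u=s+1}^t H_u, \]
where $H_s\in\{0,1\}$ indicates a jump at time $s$. Conditionally on $H$, $\mebfussy X_t$ is an independent sum of bounded random variables with Hoeffding variance proxy $V^2\asymp\sigma^2\sum_j 4^j L'_j$, where $L'_j$ counts step-times at doubling-level $j$. At $t=T_n^L-cw_n^L$ the jump count has mean $(\ln 2)T_n-c$, which sits a macroscopic $(1-\ln 2)T_n+O(c)$ below $T_n$. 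Taking $c_0 = c/\ln 2$ and $\Lambda = \{N_t\leq T_n-c_0\}$, a Chernoff bound on the binomial $N_t$ gives $\prob{\Lambda^c}=e^{-\Omega(T_n)}=o(1)$; on $\Lambda$ the typical $L'_j\lesssim 1/p_n$ then yield $V\leq K n\cdot 2^{-c_0}=Kn\,e^{-c}$ (using $2^{T_n}\asymp n/\sigma_{\mebfussy S}$ together with $\sigma_{\mebfussy S}\asymp\sigma/\sqrt{p_n}$ when $\mu=0$; the case $\mu\neq 0$ is analogous). A sub-Gaussian bound then confines $\mebfussy X_t$ to an interval of length $O(n\,\sqrt{c}\,e^{-c})$ about its conditional mean with probability at least $1-2e^{-c/2}$; this interval projects modulo $n$ to a subset of $\ZZ_n$ of $\pi_n$-measure $O(\sqrt{c}\,e^{-c})$, outside which $X_t$ rarely lies, giving
\[ \tv{\prob{X_t\in\cdot}-\pi_n} \geq 1 - 2e^{-c/2} - O(\sqrt{c}\,e^{-c}) - \prob{\Lambda^c} \geq 1 - a\,e^{-c/2}. \]

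The main technical obstacle I anticipate is that conditioning on $N_t$ perturbs the inter-jump gap distribution away from unconditioned geometric, so Theorem~\ref{thm:Y_cutoff} cannot be applied verbatim to $Y_{N_t}\mid N_t$ in the upper bound. I would address this either by reviving the Fourier/Plancherel argument underlying Theorem~\ref{thm:Y_cutoff} under the conditional gap law (the key estimate on $|\hat P_{Y_k}(y)|^2$ is only mildly perturbed by the conditioning), or by coupling $X$ with a stationary copy and using Chebyshev to control the coalescence time $\tau_{T_n+O(\sqrt{T_n})}$.
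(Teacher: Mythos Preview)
Your overall architecture is sound, but both halves diverge from the paper's argument, and in each case the paper's route is cleaner.

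\medskip
\textbf{Upper bound (part 2).} You have correctly identified the real difficulty in your own scheme: once you condition on $N_t$, the inter-jump gaps are no longer i.i.d.\ geometric, so Theorem~\ref{thm:Y_cutoff} does not apply to $Y_{N_t}\mid N_t$. The paper sidesteps this entirely. Instead of conditioning on $N_t$, it works with the stopping time $\tau=\tau_{T_n+c}$ (the time of the $(T_n+c)$-th jump). The point is that $X_\tau=Y_{T_n+c}$ \emph{unconditionally}, with exactly the law analysed in Theorem~\ref{thm:Y_cutoff}. One then splits on the event $\{\tau\le t\}=\{N_t\ge T_n+c\}$: on this event, monotonicity of total variation for the Markov chain $X$ gives
\[
\tv{\prob{X_t\in\cdot}-\pi_n}\le \tv{\prob{Y_{T_n+c}\in\cdot}-\pi_n},
\]
while $\prob{\tau>t}=\prob{N_t<T_n+c}$ is controlled by a single Chebyshev bound on the (approximately Poisson) jump count. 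No reworking of the Fourier argument or coupling is needed. This is worth internalising: conditioning on a stopping time chosen to make the \emph{subsampled} chain deterministic in its index is much tidier than conditioning on the jump count at a fixed time.

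\medskip
\textbf{Lower bound (part 1).} Your sub-Gaussian route is more elaborate than necessary and, as written, has a gap. The interval ``about its conditional mean'' is a \emph{random} set (it depends on the jump pattern $H$), and a random small set does not by itself yield a total-variation lower bound: one needs a fixed $A\subset\ZZ_n$ with $\pi_n(A)$ large and $\prob{X_t\in A}$ small. When $\mu=0$ the conditional mean is identically zero and your interval is genuinely centred at $0$, so the argument can be rescued; when $\mu\ne 0$ you would additionally need to bound $|\mu|\sum_s(1-H_s)2^{J_s^t}$ deterministically on your good event $\Lambda$, which you have not done (and which also requires controlling all the $L'_j$, not just their ``typical'' size). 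The paper avoids all of this by a first-moment argument: it proves directly that $\Ex{|X'_{T_n^L-cw_n^L}|}\le a n e^{-c}$ (Lemma~\ref{lem:expected_modulus}, handling $\mu=0$ and $\mu\ne0$ separately via the Poisson approximation to the jump count), and then a single application of Markov's inequality confines $X_t$ to a \emph{fixed} neighbourhood of $0$ of $\pi_n$-measure $O(e^{-c/2})$.
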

In particular, since $T_n^L/T_n^R = O(1)$, 
$w_n^L = o(T_n^L)$ and $w_n^R = o(T_n^R)$,
Theorem~\ref{thm:main_result} shows that $X$ exhibits a pre-cutoff,
with mixing time $\tmix{(n)}(\eps) = \Omega(n^\alpha \ln n)$.

\medskip
Throughout the rest of the paper we shall
work under the assumptions of Theorem~\ref{thm:Y_cutoff}.
In Section~\ref{sec:new_lower_bound} we establish lower
bounds on the mixing time of both $X$ and $Y$ (proving part 1 of
Theorems~\ref{thm:Y_cutoff} and \ref{thm:main_result}). In
Section~\ref{sec:upper bound} we prove the corresponding upper
bounds. Section~\ref{sec:conclusions} contains some concluding
remarks and open questions. 


\section{Lower bounds}\label{sec:new_lower_bound}
As is typical for many problems of this sort, finding lower bounds for
the mixing times of our two processes is significantly easier than
establishing upper bounds. The general approach in each case is to
find a suitably large subset of the state space which our chain has
negligible chance of hitting before the time of interest. 

\subsection{Lower bound for $Y$}\label{ssec:lower_for_Y}

An elementary calculation using
\eqref{eqn:Y_k_and_tilde_S} shows that $\Ex{Y_{T_n-c}'} \sim \sgn\mu
2^{1-c}n$, where we define $\sgn 0 =0$. Furthermore, $\Var{\mebfussy
Y_k} = 4(4^k-1)\sigma^2_{\mebfussy S}/3$, and so $\Var{\mebfussy Y_{T_n-c}} \le
4^{1-c}n^2/3$ for sufficiently large $n$. Now consider the interval
\[ A_n(c)= \set{z\in\ZZ_n\,:\, \abs{z-\Ex{ \mebfussy Y_{T_n-c}}}>d_cn}
, \] for some value $d_c\in(0,1/2)$ which we shall choose later, and
where $\abs\cdot$ represents the usual distance between
two numbers mod $n$. Note
that $\pi_n(A_n(c)) \ge 1 - 2d_c - 1/n$, and that (subject to this condition)
this set has been chosen to be as far away as possible from
$\Ex{\mebfussy Y_{T_n-c}}$.

We now use the fact that $Y$ is equal to $\mebfussy Y \mod n$, along
with Chebychev's inequality, to bound the probability that $Y_{T_n-c}$
belongs to our chosen set $A_n(c)$:
\[
\prob{Y_{T_n-c} \in A_n(c)} \leq \prob{\abs{\mebfussy Y_{T_n-c} -
    \Ex{\mebfussy Y_{T_n-c} }}> d_cn} \leq \frac{\Var{\mebfussy
    Y_{T_n-c}}}{(d_cn)^2} \leq\frac{4^{1-c}}{3d_c^2} \,.
\]

Thus the set $A_n(c)$ satisfies
\[ \pi_n(A_n(c)) - \prob{Y_{T_n-c}\in A_n(c)} \geq 1-2d_c -1/n -
\frac{4^{1-c}}{3d_c^2} \,. \] Finally, taking $d_c =
\bra{4^{1-c}/3}^{1/3}$ yields the claimed left hand window of the
cutoff in part 1 of Theorem~\ref{thm:Y_cutoff}:
\[
\liminf_{n\to\infty}\tv{\prob{Y_{T_n-c}\in\cdot}-\pi_n(\cdot)}\geq 1
-\left(\frac{9}{4^{c-1}}\right)^{1/3} \geq 1-4^{1-c/3}\,.
\]

\subsection{Lower bound for $X$}\label{ssec:lower_for_X}

Let $\mebfussy{X}$ be the random walk on $\ZZ$ defined as follows:
\begin{equation}\label{eqn:process_X'}
  \mebfussy X_0 = 0; \qquad \mebfussy X_t = \begin{cases}\mebfussy
    X_{t-1} + \mebfussy \xi_t & \text{with probability $1-p_n$} \\
    2\mebfussy X_{t-1} & \text{with probability $p_n$.}
  \end{cases}
\end{equation}
That is, $X = \mebfussy X \mod n$.

A natural approach to lower bound the mixing time for $X$ would be to
replicate the above argument for $Y$, using Chebychev's inequality
applied to $\mebfussy X$. However, the random number of jumps by time
$T_n^L-cw_n^L$ causes the variance of $\mebfussy X$ at this time to be
too large for this to work. Instead, we proceed by bounding the
expectation of $\abs{\mebfussy
  X_{T_n^L-cw_n^L}}$ 
and then using Markov's inequality to show that $X_{T_n^L-cw_n^L}$ has
negligible chance of belonging to a region of the state space situated
`opposite' $X_0=0$. We begin by proving the following lemma.

\begin{lem}\label{lem:expected_modulus}
  There exists a constant $a<\infty$ such that at time $T_n^L-cw_n^L$,
  \begin{equation}\label{eqn:expected_modulus}
    \Ex{\abs{\mebfussy X_{T_n^L-cw_n^L}}} \leq ane^{-c} \,.
  \end{equation}
\end{lem}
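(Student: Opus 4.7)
The plan is to derive a one-step recursion for $m_t := \Ex{\abs{\mebfussy X_t}}$, iterate it to obtain a closed form, and then substitute $t = T := T_n^L - cw_n^L$. Conditioning on whether the $t$-th update is a step or a jump, I would apply the triangle inequality $\abs{\mebfussy X_{t-1} + \mebfussy\xi_t} \leq \abs{\mebfussy X_{t-1}} + \abs{\mebfussy\xi_t}$ in the step case (together with the independence of $\mebfussy\xi_t$ and $\mebfussy X_{t-1}$), and the identity $\abs{2\mebfussy X_{t-1}} = 2\abs{\mebfussy X_{t-1}}$ in the jump case, to obtain
\[
m_t \leq (1-p_n)\bra{m_{t-1} + \Ex{\abs{\mebfussy\xi}}} + 2p_n m_{t-1} = (1+p_n)m_{t-1} + (1-p_n)\Ex{\abs{\mebfussy\xi}}.
\]
Iterating this from $m_0 = 0$ gives the closed-form bound $m_T \leq \Ex{\abs{\mebfussy\xi}}\bra{(1+p_n)^T - 1}/p_n$.

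I would next evaluate this bound asymptotically. The choices $T_n^L = (2\ln 2)n^\alpha T_n$ and $w_n^L = 2n^\alpha$ are tuned so that $Tp_n = \ln 2\cdot T_n - c$ exactly. Combined with the expansion $\log(1+p_n) = p_n + O(p_n^2)$ and the observation that $Tp_n^2 = O(T_n/n^\alpha) = o(1)$, this yields $(1+p_n)^T = 2^{T_n}e^{-c}(1+o(1))$ as $n\to\infty$. Substituting $2^{T_n} = n/\sigma_{\mebfussy S}$ and using~\eqref{eqn:var_S} in the form $p_n\sigma_{\mebfussy S} = \sqrt{(1-p_n)(\mu^2 + p_n\sigma^2)} \to \abs{\mu}$ (when $\mu\neq 0$), I obtain
\[
m_T \leq \frac{\Ex{\abs{\mebfussy\xi}}\, n\, e^{-c}}{p_n\sigma_{\mebfussy S}}(1+o(1)) \leq ane^{-c}
\]
for any $a$ slightly larger than $\Ex{\abs{\mebfussy\xi}}/\abs{\mu}$, completing the proof in the case $\mu\neq 0$.

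The main obstacle I anticipate is the zero-drift case $\mu = 0$, in which $p_n\sigma_{\mebfussy S} = \sigma\sqrt{p_n(1-p_n)}\to 0$ and the recursion-based bound above degrades by a factor of $\Theta(n^{\alpha/2})$. A supplementary argument will be required here: one natural option is to run a parallel recursion on $\Ex{\mebfussy X_t^2}$ and invoke Jensen's inequality (exploiting $\Ex{\mebfussy X_t} = 0$ when $\mu=0$); another is to decompose $\mebfussy X_T = \sum_{s} 2^{N_T - N_s}\mebfussy\xi_s$ through the random jump times $\tau_1,\ldots,\tau_{N_T}$ and exploit the symmetry and subgaussian tails of the resulting conditional step-sums. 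Away from this wrinkle, the only remaining technicality is bookkeeping the $1+o(1)$ errors in the approximation $(1+p_n)^T \sim 2^{T_n}e^{-c}$ uniformly in $n$, which is routine.
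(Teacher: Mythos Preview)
Your recursion argument for the case $\mu\neq 0$ is correct and is in fact cleaner than the paper's proof, which conditions on the number $J_n(-c)$ of jumps by time $T_n^L-cw_n^L$, uses a Poisson approximation for $J_n(-c)$, and then averages the conditional bound $\Ex{|X'_T|\mid J_n(-c)=k}\le 2^{k+1}\Ex{|S_1'^{(k)}|}$ over~$k$. Your one-step recursion $m_t\le(1+p_n)m_{t-1}+(1-p_n)\Ex{|\mebfussy\xi|}$ reaches the same endpoint $m_T\lesssim \Ex{|\mebfussy\xi|}\,n e^{-c}/(p_n\sigma_{\mebfussy S})\sim (\Ex{|\mebfussy\xi|}/|\mu|)\,n e^{-c}$ with much less work. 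The price you pay is that the recursion does not distinguish between the two drift regimes, which is exactly where the gap appears.

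The genuine gap is in your proposed fix for $\mu=0$. The second-moment recursion you suggest does \emph{not} recover the lemma: since a jump multiplies $X'_{t-1}$ by $2$, it multiplies $(X'_{t-1})^2$ by $4$, and the recursion becomes $v_t=(1+3p_n)v_{t-1}+(1-p_n)\sigma^2$. Iterating gives $v_T\sim \tfrac{\sigma^2}{3p_n}e^{3Tp_n}=\tfrac{\sigma^2}{3p_n}\,8^{T_n}e^{-3c}$, and hence $\sqrt{v_T}\sim \tfrac{\sigma}{\sqrt{3p_n}}\,2^{3T_n/2}e^{-3c/2}$. Comparing with the target $ne^{-c}=2^{T_n}\sigma_{\mebfussy S}e^{-c}\sim 2^{T_n}\tfrac{\sigma}{\sqrt{p_n}}e^{-c}$, the Jensen bound is too large by a factor of order $2^{T_n/2}\to\infty$. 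In other words, the exponent $3Tp_n/2$ produced by the $L^2$ route strictly exceeds the exponent $Tp_n$ you need; Jensen's inequality is simply not tight enough here because the randomness in the \emph{number} of doublings dominates the second moment.

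Your second suggestion --- decomposing through the jump times and exploiting that the conditional inter-jump increments have mean zero --- is essentially what the paper does. There one bounds $\Ex{|X'_T|\mid J_n(-c)=k}\le 2^{k+1}\Ex{|S_1'^{(k)}|}$ and, when $\mu=0$, uses $\mathrm{Var}(S_1'^{(k)})\le 2\sigma^2 T_n^L/(k+1)$ together with Chebychev to get $\Ex{|S_1'^{(k)}|}\le 3\sigma\sqrt{T_n^L/(k+1)}$. Averaging over a Poisson$(m_n(-c))$ number of jumps then yields the crucial estimate $\Ex{2^{J_n(-c)}/\sqrt{J_n(-c)+1}}\lesssim e^{m_n(-c)}/\sqrt{m_n(-c)}$, which gives exactly the right growth. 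So your second route is viable, but it is not a minor bookkeeping exercise: this conditional-on-$k$ square-root saving is the key idea that your recursion-based approach is missing.
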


\begin{proof}
  Let $J_n(-c)$ be the number of jumps in $\mebfussy X$ by time
  $T_n^L-cw_n^L$. For large $n$, the distribution of $J_n(-c)$ is well
  approximated by a $\text{Poisson}(m_n(-c))$ distribution, with
  \begin{equation}\label{eqn:m_n}
    m_n(-c) = p_n(T_n^L-c w_n^L) = \ln (n/\sigma_{\mebfussy S}) - c \,,
  \end{equation}
  (using the definition of $T_n^L$ in Theorem~\ref{thm:main_result}
  and the equality $p_n = 1/(2n^\alpha)$). Note that under the
  assumptions of Theorem~\ref{thm:main_result}, $m_n(-c)\to\infty$ as
  $n\to\infty$. Conditional on the event $\{J_n(-c) = k\}$ we can
  express $\mebfussy X_{T_n^L-cw_n^L}$ as follows:
  \begin{equation}\label{eqn:X'_condtioned_on_jumps}
    \mebfussy X_{T_n^L-cw_n^L} \,|\, \{J_n(-c) = k\} =
    \sum_{i=1}^{k+1}2^{k+1-i}{\mebfussy S_i}^{(k)}\,,
  \end{equation}
  where ${\mebfussy S_i}^{(k)} \stackrel{d}{=} \mebfussy S_i\,|\, \{J_n(-c) = k\}$.  That
  is, ${\mebfussy S_i}^{(k)}$ is the additive increment in $\mebfussy X$ between jump times
  $\tau_{i-1}$ and $\tau_i$, with $\tau_0:=0 < \tau_1<\dots<\tau_k\leq
  \tau_{k+1}:= T_n^L-cw_n^L$.
  It is clear that for $i=1,\dots,k$ the random variables ${\mebfussy S_i}^{(k)}$
  have a common distribution, and that $\mathbb E[|{\mebfussy S}^{(k)}_{\hspace{-1mm}k+1}|]
  \leq \mathbb E[|{\mebfussy S_1}^{(k)}|]$ (since it is possible to have
  $\tau_k=\tau_{k+1}$). It follows from
  \eqref{eqn:X'_condtioned_on_jumps} that for $k\ge1$,
  \begin{equation}\label{eqn:next_X'_bound}
    \Ex{\left.\abs{\mebfussy X_{T_n^L-cw_n^L}} \,\right\vert\, J_n(-c) =
      k} \le 2^{k+1}\Ex{|{\mebfussy S_1}^{(k)}|}\,.
  \end{equation}
  We now deal with the cases of zero and non-zero $\mu$ separately.

  \smallskip $\bullet$ Case 1: $\mu\neq 0$.\\ Suppose that $k\ge1$.
  Let $b$ be an integer satisfying $B\subseteq[-2^b,2^b]$, where
  (recall that) $B$ is the support of $\mebfussy \xi$. Then
  \begin{align}
    \Ex{|{\mebfussy S_1}^{(k)}|} &\le 2^b \Ex{\tau_1-1\,|\, J_n(-c)
      = k} \nonumber \\
    &\le \frac{2^b(T_n^L-cw_n^L)}{k} \le
    \frac{2^{b+1}(T_n^L-cw_n^L)}{k+1}\,, \label{eqn:k+1_bound}
  \end{align}
  where the second inequality follows from the symmetry observation made
  just before \eqref{eqn:next_X'_bound}, and the last one uses the assumption that $k\ge
  1$. Combining \eqref{eqn:next_X'_bound} and \eqref{eqn:k+1_bound} we see
  that for $k\ge 1$,
  \begin{equation*}\label{eqn:Ex_X'_condtioned_on_jumps}
    \Ex{\left.\abs{\mebfussy X_{T_n^L-cw_n^L}} \,\right\vert\, J_n(-c) = k} \leq \frac{
      2^{b+k+2}(T_n^L-cw_n^L)}{k+1} \,.
  \end{equation*}
  Furthermore, note that this also (trivially) holds when $k=0$.

  Now average over the distribution of $J_n(-c)$:
  \begin{align*}
    \Ex{\abs{\mebfussy X_{T_n^L-cw_n^L}}} &\leq 2^{b+1} (T_n^L-cw_n^L)
    \sum_{k=0}^\infty
    \frac{e^{-m_n(-c)}m_n(-c)^k2^{k+1}}{k!(k+1)} \\
    &\leq \frac{2^{b+1}(T_n^L-cw_n^L)}{m_n(-c)}e^{m_n(-c)}
    = \frac{2^{b+1}}{p_n}\frac{ne^{-c}}{\sigma_{\mebfussy S}} \sim
    2^{b+1} ne^{-c} |\mu|
  \end{align*}
  for large $n$, thanks to the expression for $m_n(-c)$ in
  \eqref{eqn:m_n} and the relationship between $p_n$ and
  $\sigma_{\mebfussy S}$ in \eqref{eqn:var_S}. Taking $a=
  2^{b+1}|\mu|$ gives the required result.

  \medskip
  $\bullet$ Case 2: $\mu= 0$. \\
  In this case we know that $\Ex{{\mebfussy S_1}^{(k)}}=0$. Furthermore,
  \begin{align*}
    \Var{{\mebfussy S_1}^{(k)}}
    \le \Ex{\sigma^2 ( \tau_1 -1) \,|\, J_n(-c) = k } \le
    \frac{2\sigma^2T_n^L}{k+1} \,,
  \end{align*}
  using the same reasoning that led to
  \eqref{eqn:k+1_bound}. Chebychev's inequality then yields, for any
  positive $x$:
  \begin{align}
    \Ex{\abs{{\mebfussy S_1}^{(k)}}} \,\leq \, x + \int_x^\infty
    \prob{\abs{{\mebfussy S_1}^{(k)}}>s}
    \mathrm{d}s \, \leq \, x + \frac{2\sigma^2T_n^L}{(k+1)x}
    \,. \nonumber
  \end{align}
  Substituting $x=2\sigma\sqrt{T_n^L/(k+1)}$ we obtain
  \begin{equation*}\label{eqn:bound_when_mu_0}
    \Ex{\abs{{\mebfussy S_1}^{(k)}}} \leq 3\sigma\sqrt{\frac{T_n^L}{k+1}} \,.
  \end{equation*}
  Combining this with \eqref{eqn:X'_condtioned_on_jumps} we see that
  \begin{equation}\label{eqn:one_more_expectation_to_go}
    \Ex{\abs{\mebfussy X_{T_n^L-cw_n^L}}} \leq 6\sigma\sqrt{T_n^L}
    \Ex{\frac{2^{J_n(-c)}}{\sqrt{J_n(-c)+1}}}\,.
  \end{equation}
  Now consider the final expectation in
  \eqref{eqn:one_more_expectation_to_go}. Recalling that, for large
  $n$, $J_n(-c)$ is well-approximated by a Poisson distribution with
  mean $m_n(-c)$, we can bound this as follows:
  \begin{align}
    \Ex{\frac{2^{J_n(-c)}}{\sqrt{J_n(-c)+1}}} &\leq
    e^{m_n(-c)}\sum_{k=0}^{m_n(-c)}
    \frac{(2m_n(-c))^ke^{-2m_n(-c)}}{k!}\nonumber \\
    &\qquad + \frac{1}{\sqrt{m_n(-c)}} \sum_{k>{m_n(-c)}}
    \frac{(2m_n(-c))^ke^{-m_n(-c)}}{k!} \nonumber \\
    &\leq e^{m_n(-c)}\left[\prob{\Lambda< m_n(-c)} +
      \frac{1}{\sqrt{m_n(-c)}} \right] \label{eqn:Lambda}\,,
  \end{align}
  where $\Lambda\sim\text{Poisson}(2m_n(-c))$. A final application of
  Chebychev's inequality tells us that $\prob{\Lambda<{m_n(-c)}} \leq
  2/{m_n(-c)}$, and since $m_n(-c)\to \infty$ as $n\to \infty$ this
  term in \eqref{eqn:Lambda} is negligible.  Combining
  \eqref{eqn:one_more_expectation_to_go} and \eqref{eqn:Lambda} we
  therefore arrive at our desired result: for large $n$ we can write
  \begin{align*}
    \Ex{\abs{\mebfussy X_{T_n^L-cw_n^L}}} &\leq 12\sigma
    \sqrt{T_n^L}\frac{e^{m_n(-c)}}{\sqrt{m_n(-c)}} =
    \frac{12\sigma}{\sqrt{p_n}}\frac{ne^{-c}}{\sigma_{\mebfussy S}}
    \sim 12 ne^{-c}
  \end{align*}
  thanks once again to the expressions for $m_n(-c)$ and
  $\sigma_{\mebfussy S}$ in \eqref{eqn:m_n} and \eqref{eqn:var_S}.
\end{proof}

Recall that our aim in this section is to lower bound
\begin{equation}\label{eqn:need_set_A}
  \tv{\mathbb P(X_{T_n^L-cw_n^L}\in\cdot) - \pi_n(\cdot)}
  = \sup_A \bra{\pi_n(A) - \mathbb P(X_{T_n^L-cw_n^L}\in A)}\,.
\end{equation}
Define the set $D_n(c)$ to be those points in $\ZZ_n$ whose distance
from 0 (measured in the usual way between two numbers in $\ZZ_n$) is
greater than $e^{-c/2}n$. Note that
\[ \pi_n(D_n(c)) \ge 1- 2e^{-c/2}-1/n \,. \]
  
Using Markov's inequality and Lemma~\ref{lem:expected_modulus} we
obtain:
\begin{align*}
  \prob{X_{T_n^L-cw_n^L}\in D_n(c)} &\leq \prob{\abs{\mebfussy
      X_{T_n^L-cw_n^L}} > e^{-c/2}n} 
  \leq ae^{-c/2} \,.
\end{align*}
Thus
\[
\pi_n(D_n(c)) - \mathbb P(X_{T_n^L-cw_n^L}\in D_n(c)) \ge
1-(2+a)e^{-c/2}-1/n
\]
and so
\[ \liminf_{n\to\infty} \tv{\mathbb P(X_{T_n^L-cw_n^L}\in\cdot) -
  \pi_n(\cdot)} \geq 1-(2+a)e^{-c/2}\,.
\]


\section{Upper bounds}\label{sec:upper bound}

In this section we prove the second parts of
Theorems~\ref{thm:Y_cutoff} and \ref{thm:main_result}. Most of the
work here is required to prove the result for the subsampled chain
$Y$. Indeed, the result for $X$ follows quickly from this, as we now
demonstrate.

Let $J_n(c)$ denote the number of jumps in $X$ by time $T_n^R+cw_n^R$:
for large $n$ this is once again well approximated by a Poisson random
variable, this time with mean $m_n(c) = T_n + c\sqrt{T_n}$. Assuming
Theorem~\ref{thm:Y_cutoff} to be true, we know that the subsampled
chain $Y$ is well mixed after $T_n+c$ jumps; we proceed by considering
whether or not this number of jumps has occurred by our time of
interest, $T_n^R+cw_n^R$. For ease of display, in the next few lines
we shall write $\tau = \tau_{T_n+c}$ for the
(random) time at which $X$ jumps for the $(T_n+c)$-th time.
\begin{align}
  \tv{\mathbb P(X_{T_n^R+cw_n^R}\in\cdot) - \pi_n(\cdot)} &\le
  \Ex{\tv{\mathbb P(X_{T_n^R+cw_n^R}\in\cdot) -
      \pi_n(\cdot)}\,;\, \tau\leq {T_n^R+cw_n^R}} \nonumber \\
&\qquad + \prob{\tau>{T_n^R+cw_n^R}} \label{eqn:tv_split}
\end{align}
due to total variation being bounded above by 1. We now use the fact
that total variation is non-increasing over time to bound the
expectation term:
\begin{align}
  \Ex{\tv{\mathbb P(X_{T_n^R+cw_n^R}\in\cdot) - \pi_n(\cdot)}\,;\, \tau\leq {T_n^R+cw_n^R}} &=
  \Ex{\sum_{k=1}^{T_n^R+cw_n^R}\tv{\mathbb P(X_{T_n^R+cw_n^R}\in\cdot) -
      \pi_n(\cdot)}\indev{\tau=k}} \nonumber \\
  &\le \Ex{\sum_{k=1}^{T_n^R+cw_n^R}\tv{\mathbb P(X_k\in\cdot) -
      \pi_n(\cdot)}\indev{\tau=k}} \nonumber \\
  &= \Ex{\tv{\mathbb P(X_\tau\in\cdot) -
      \pi_n(\cdot)}\,;\,\tau\le {T_n^R+cw_n^R}} \nonumber \\
  &\le \tv{\prob{X_\tau\in\cdot}-
    \pi_n(\cdot)} \nonumber \\
  &= \tv{\prob{Y_{T_n+c}\in\cdot}- \pi_n(\cdot)} \,.\label{eqn:tv_split2}
\end{align}

The first term in \eqref{eqn:tv_split} is thus controlled by part 2 of
Theorem~\ref{thm:Y_cutoff}. Furthermore, using Chebychev's inequality
once again:
\begin{align*}
  \prob{\tau>T_n^R+cw_n^R} &= \prob{J_n(c)< T_n+c} \leq
  \prob{\abs{J_n(c) - m_n(c)}
    \geq c\bra{\sqrt{T_n}-1} } \\
  &\leq \frac{m_n(c)}{c^2 T_n+O(\sqrt{T_n})} \to \frac{1}{c^2}
  \quad\text{as $n\to\infty$.}
\end{align*}
Putting all of the above together we complete the proof of
Theorem~\ref{thm:main_result}: for sufficiently large $c>0$,
\[ \limsup_{n\to\infty} \tv{\mathbb P(X_{T_n^R+cw_n^R}\in\cdot) -
  \pi_n(\cdot)} \leq \frac{2}{c^2} \,. \]

\subsection{Upper bounds and representation theory}

Our basic method for obtaining upper bounds on the mixing times of our
processes is to employ the techniques developed by
\citet{Diaconis.Shahshahani1981} for analysing random walks on groups.
Given a probability $Q$ on a finite group $G$, and a representation
$\rho$ of $G$, we can form the Fourier transform $\hat Q(\rho)$ of $Q$
at $\rho$ by setting
$$
\hat Q(\rho):= \sum_{g\in G} Q(g)\rho(g)\,.
$$
The following Upper Bound Lemma \citep{Diaconis1988} then allows one
to compute an explicit upper bound for the total variation distance
between a probability $Q$ on $G$ and the uniform distribution $\pi$.

\begin{lem}\label{lem:UBL}
  Given a probability $Q$ on a finite group $G$, we have
$$
\tv{Q-\pi}^2 \leq \frac{1}{4}\sum \degree(\rho) \tr(\hat Q(\rho) \hat
Q(\rho)^*),
$$
where $A^* = (\overline{a_{ji}})$ denotes the complex conjugate
transpose of the matrix $A = (a_{ij})$, $\tr$ denotes the trace
function on square matrices, and the sum is taken over all non-trivial
irreducible representations $\rho$ of $G$.
\end{lem}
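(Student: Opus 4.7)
The plan is to combine two classical ingredients: the Cauchy--Schwarz inequality, which converts the $L^1$ total variation distance into an $L^2$ quantity on $G$, and Plancherel's identity for a finite group, which expresses that $L^2$ quantity as a sum of traces of Fourier coefficients over irreducible representations.

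First I would start from $\tv{Q-\pi} = \frac{1}{2}\sum_{g\in G}\abs{Q(g)-\pi(g)}$ and apply Cauchy--Schwarz to $\sum_{g} 1\cdot\abs{Q(g)-\pi(g)}$ to get
\[
\tv{Q-\pi}^2 \leq \frac{\abs G}{4}\sum_{g\in G}\abs{Q(g)-\pi(g)}^2.
\]
This is the only genuinely analytic step, and it is what produces the factor of $1/4$ in the final bound.

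Next I would invoke Plancherel's identity, which says that for any $f\colon G\to\CC$,
\[
\sum_{g\in G}\abs{f(g)}^2 = \frac{1}{\abs G}\sum_\rho \degree(\rho)\tr\bra{\hat f(\rho)\hat f(\rho)^*},
\]
the sum running over unitary irreducible representations of $G$. This is a standard consequence of the Schur orthogonality relations on matrix coefficients, so I would simply cite it. Applying it to $f=Q-\pi$, the $1/\abs G$ cancels the $\abs G$ produced by Cauchy--Schwarz, leaving $\tv{Q-\pi}^2 \le \tfrac{1}{4}\sum_\rho \degree(\rho)\tr(\widehat{Q-\pi}(\rho)\widehat{Q-\pi}(\rho)^*)$. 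It then remains to process this representation by representation: for the trivial representation $\rho_0$ one has $\hat Q(\rho_0)=\hat\pi(\rho_0)=1$, so $\widehat{Q-\pi}(\rho_0)=0$; while for any non-trivial irreducible $\rho$, Schur's first orthogonality relation gives $\sum_{g\in G}\rho(g)=0$, so $\hat\pi(\rho)=0$ and $\widehat{Q-\pi}(\rho)=\hat Q(\rho)$. Thus only non-trivial irreducibles contribute, yielding precisely the bound stated in the lemma.

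The main ``obstacle'' here is really just bookkeeping — one must be careful with the normalisation in Plancherel's formula so that the $\abs G$ factors cancel and the final estimate is dimension-free. There is no substantive probabilistic or combinatorial content; the lemma is a classical identity of non-commutative harmonic analysis on finite groups, and I would give the proof above essentially in the form sketched, with a reference to \citet{Diaconis1988} for the Plancherel step.
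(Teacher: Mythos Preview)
Your proof is correct and is exactly the standard Diaconis--Shahshahani argument: Cauchy--Schwarz to pass from $L^1$ to $L^2$, then Plancherel to decompose into Fourier coefficients, with the trivial representation dropping out because $\hat Q(\rho_0)=\hat\pi(\rho_0)$ and the non-trivial ones simplifying because $\hat\pi(\rho)=0$. The paper, however, does not prove this lemma at all; it simply states it with a citation to \citet{Diaconis1988} and moves on, so there is nothing to compare against beyond noting that your sketch reproduces precisely the proof found in that reference.
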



Since the Fourier transform behaves well with respect to convolution,
this lemma provides a practical tool for bounding the mixing time of a
random walk on a group.
Although $Y$ is not strictly a random walk on the additive group
$(\ZZ_n,+)$, the measure giving the distribution of $Y_k$ \emph{can}
be expressed as the convolution of measures. Here the representation
theory is particularly straightforward:
the Upper Bound Lemma becomes
\begin{equation}\label{eqn:UBL}
  \tv{Q-\pi}^2 \leq \frac{1}{4}\sum_{s=1}^{n-1} |\hat 
  Q(\rho_s)|^2\,,
\end{equation}
where the representations $\rho_0,\rho_1,\ldots,\rho_{n-1}$ all have
degree $1$, and are completely determined by the following equations:
$$
\rho_s(1) := e^{i\frac{2\pi}{n}s} \textrm{ for } 0\leq s\leq n-1\,.
$$


Recall from \eqref{eqn:Y_k_and_tilde_S} that (with $Y_0=0$), $Y_k =
\sum_{j=1}^k2^j\mebfussy S_{k+1-j} \mod n$. The measure $P_k$ giving
the distribution of $Y_k$ is the convolution of the measures
$\lambda_j$ given by $\lambda_j(2^ja\mod n) = \prob{S_1=a}$ for every
$j,a$, so we begin by calculating the Fourier transforms of the
$\lambda_j$.  To ease notation, for each $1\leq j \leq k$ and $0\leq s
\leq n-1$, set
$$
\omega_{s,j} = \rho_s(2^j) = e^{i\frac{2\pi}{n}2^js}
$$
and note that for any $j,s$ we have $\omega_{s,j}^n = 1$.  Then for
each $0\leq s \leq n-1$,
\begin{align*}
  \hat{\lambda}_j(\rho_s) &= \sum_{a=0}^{n-1}
  \omega_{s,j}^a\prob{S_1=a}= \sum_{a=0}^{n-1} \omega_{s,j}^a \sum_{d
    \in
    \ZZ}\prob{\mebfussy S_1=a + dn} \\
  &= \sum_{d \in \ZZ}\sum_{a=0}^{n-1}
  \omega_{s,j}^{a+dn}\prob{\mebfussy S_1=a + dn} = \sum_{a\in\ZZ}
  \omega_{s,j}^a\prob{\mebfussy S_1=a} = G_{\mebfussy
    S}(\omega_{s,j})\,,
\end{align*}
where $G_{\mebfussy S}$ is the probability generating function (PGF)
of $\mebfussy S$. It follows from its definition in
\eqref{eqn:Y_k_and_tilde_S} as a random sum of random step lengths
that this satisfies
\begin{equation}\label{eqn:pgf_tilde_S}
  G_{\mebfussy S}(\omega_{s,j}) = \frac{p_n}{1-(1-p_n)G_{\mebfussy\xi}(\omega_{s,j})} \,,
\end{equation}
where $G_{\mebfussy\xi}$ is the PGF of $\mebfussy\xi$.

When we substitute into the Upper Bound Lemma \ref{lem:UBL}, we are
interested in the modulus squared of such expressions, by Equation
\eqref{eqn:UBL}. The modulus of the top line squared is $p_n^2$, and
the modulus of the bottom line squared is
\begin{align*}
  (1-(1-p_n)&G_{\mebfussy\xi}(\omega_{s,j}))(1-(1-p_n)\overline{G_{\mebfussy\xi}(\omega_{s,j})}) \\
  &=
  1-(1-p_n)\bra{G_{\mebfussy\xi}(\omega_{s,j})+\overline{G_{\mebfussy\xi}(\omega_{s,j})}}
  +
  (1-p_n)^2G_{\mebfussy\xi}(\omega_{s,j})\overline{G_{\mebfussy\xi}(\omega_{s,j})} \\
  &= 1-2(1-p_n) \Re\bra{G_{\mebfussy\xi}(\omega_{s,j})} + (1-p_n)^2
  \abs{G_{\mebfussy\xi}(\omega_{s,j})}^2 \,.
\end{align*}
Combining all of the above leads to the following upper bound for the
total variation distance at time $k$:
\begin{equation}\label{eqn:UB_before_split}
  \norm{\prob{Y_{k}\in\cdot} - \pi_n(\cdot)}^2\leq 
  \frac{1}{4}\sum_{s=1}^{n-1}\prod_{j=1}^{k} \frac{p_n^2}{1-2(1-p_n)
    \Re\bra{G_{\mebfussy\xi}(\omega_{s,j})} + (1-p_n)^2
    \abs{G_{\mebfussy\xi}(\omega_{s,j})}^2}\,.
\end{equation}


\subsection{Strategy for analysing the upper bound}

In order to establish a cutoff for $Y$, we need to control the right
hand side of \eqref{eqn:UB_before_split} around time $T_n =
\log_2(n/\sigma_{\mebfussy S})$. To that end, we define for $c\in\NN$
a function $U_n(c)$ by
\begin{equation}\label{eqn:U_defn}
  U_n(c) =  \sum_{s=1}^{n-1} \prod_{j=1}^{T_n+c} \phi_n(s,j)
\end{equation}
where
\begin{equation}\label{eqn:phi_defn}
  \phi_n(s,j):= \frac{p_n^2}{1-2(1-p_n)
    \Re\bra{G_{\mebfussy\xi}(\omega_{s,j})} + (1-p_n)^2
    \abs{G_{\mebfussy\xi}(\omega_{s,j})}^2} \in(0,1]\,,
\end{equation}
and note that, thanks to \eqref{eqn:UB_before_split},
Theorem~\ref{thm:Y_cutoff} will be proved if we can show that (for odd
$n$) $\limsup_{n\to\infty} U_n(c)\leq O(4^{-c})$.

Our strategy for bounding $U_n(c)$ involves identifying for each
$1\leq s \leq n-1$ enough values $j$ for which $\phi_n(s,j)$ is
sufficiently small to provide a useful upper bound.  In order to do
this, it is convenient to first reparametrise, so we let $Z_n$ be a
random variable uniformly distributed on the set
$\{s/n:s=1,\dots,n-1\}\subset[0,1]$.  Then we may write
\begin{equation}\label{eqn:U_as_expectation}
  U_n(c) = \Ex{f_n(Z_n,T_n+c)}, \quad\text{where } f_n(x,t) := (n-1) 
  \prod_{j=1}^{t} \phi_n(nx,j)\,.
\end{equation}
The second step is to split the analysis of the function $f_n$ into
two stages by splitting the range of $x$ into two pieces.  In order to
do this, let $L$ be an integer satisfying $2\alpha L>1$, and once
again let $b$ be an integer satisfying $B\subseteq[-2^b,2^b]$.  We
define a finite lattice $\mathcal{L}$ of points in $[0,1]$ by
\[ \mathcal{L} = \set{\frac{k}{2^{L+b}}\;:\;k=0,\dots,2^{L+b}}\,. \]
Now choose some $\eps\in(0,1/(2^{L+b}))$, and define the set $\mathcal
L_\eps$ to be the intersection of $[0,1]$ with
\[ \bigcup_{x\in\mathcal L} \squ{x-\frac{\eps}{2},x+\frac{\eps}{2}}
\,. \] Importantly, $\mathcal L_\eps$ depends only on $\alpha$, $B$
and $\eps$, but not on $n$. We now proceed to bound $f_n(x,T_n+c)$ by
considering in turn the cases where $x$ does and does not belong to
the set $\mathcal L_\eps$.

\subsection{Controlling $f_n$ for $x\notin\mathcal L_\eps$}

For $x\notin\mathcal L_\eps$ we see that $2\pi 2^ja x\neq 0 \mod 2\pi$
for any $j=1,2,\dots,L$ and $a\in B$.  Thus $\cos(2\pi2^jax)$ is
bounded away from 1 for all such $x$ and $j$, and we can write
\[ \Re\bra{G_{\mebfussy\xi}(e^{i2\pi2^j x})} = \sum_{a=0}^{2^b}
\prob{\abs{\mebfussy\xi}=a}\cos\bra{2\pi2^jax}\leq 1-\kappa(x)\,, \]
for all $j=1,\dots,L$, where $\kappa(x)$ is strictly positive.

Substituting this into the expression for $\phi_n$ in
\eqref{eqn:phi_defn}, and lower-bounding the modulus squared of a
complex number by the square of its real part, we obtain:
\begin{align*} \phi_n(nx,j)& \leq \frac{p_n^2}{1-2(1-p_n)
    \Re\bra{G_{\mebfussy\xi}(e^{i2\pi2^j x})} + (1-p_n)^2 \abs{G_{\mebfussy\xi}(e^{i2\pi2^j x})}^2} \\
  &\leq \frac{p_n^2}{1-2(1-p_n) \Re\bra{G_{\mebfussy\xi}(e^{i2\pi2^j
        x})} + (1-p_n)^2
    \Re\bra{G_{\mebfussy\xi}(e^{i2\pi2^j x})}^2} \\
  &= \left( \frac{p_n}{1-(1-p_n) \Re\bra{G_{\mebfussy\xi}(e^{i2\pi2^j
          x})}}\right)^2 \leq \left( \frac{p_n}{1-(1-p_n)
      (1-\kappa(x))}\right)^2 =O(p_n^2) \,.
\end{align*}

Since $\phi_n(nx,j)\in(0,1]$, it follows that for $x\notin\mathcal
L_\eps$ and for $n$ sufficiently large that $T_n+c\ge L$,
\[ f_n(x,T_n+c) = (n-1) \prod_{j=1}^{T_n+c} \phi_n(nx,j) \leq (n-1)
\prod_{j=1}^L \phi_n(nx,j) \leq O(n^{1-2\alpha L}) \,. \] Thanks to
our choice of $L>1/2\alpha$ we can now use Fatou's Lemma to deduce
that \begin{equation}\label{eqn:U_to_zero_off_lattice}
  \limsup_{n\to\infty} \Ex{f_n(Z_n,T_n+c);Z_n\notin\mathcal L_\eps} =
  0 \,.
\end{equation}

\subsection{Controlling $f_n$ for $x\in\mathcal L_\eps$}

It remains to deal with $\Ex{f_n(Z_n,T_n+c);Z_n\in\mathcal L_\eps}$.
We begin by writing (for any $t\in\NN$)
\begin{align}
  \Ex{f_n(Z_n,t);Z_n\in\mathcal L_\eps} &= \frac{1}{n-1}
  \sum_{k=1}^{2^{L+b}-1} \sum_{r \geq
    1}f_n\bra{\frac{r}{n},\,t}\indev{\abs{\tfrac{k}{2^{L+b}}-\tfrac{r}{n}}\leq
    \tfrac{\eps}{2}} \nonumber \\
  & \qquad + \frac{1}{n-1}\sum_{r \geq
    1}\bra{f_n\bra{\frac{r}{n},\,t}+f_n\bra{1-\frac{r}{n},\,t}}\indev{\tfrac{r}{n}\leq
    \tfrac{\eps}{2}}\,, \label{eqn:first_expansion}
\end{align}
where the last sum deals with the two end intervals in $\mathcal
L_\eps$. Consider first of all the double sum here. Since $n$ is odd,
the shortest possible distance between any point $r/n$ and the lattice
point $k/2^{L+b}$ is $1/(n2^{L+b})$; as $f_n$ is non-negative we can
therefore upper bound the inner sum by summing over a lattice of size
$1/(n2^{L+b})$ centred around $k/2^{L+b}$ as follows:
\begin{equation}\label{eqn:finer_lattice}
  \sum_{r \geq
    1}f_n\bra{\frac{r}{n},\,t}\indev{\abs{\tfrac{k}{2^{L+b}}-\tfrac{r}{n}}\leq
    \tfrac{\eps}{2}} \leq \sum_{\substack{r =-\infty \\r\neq 0}}^\infty
  f_n\bra{\frac{k}{2^{L+b}}-\frac{r}{n2^{L+b}},\,t}\,.
\end{equation}

Thus \eqref{eqn:first_expansion} can be bounded as follows:
\begin{align}
  \Ex{f_n(Z_n,t);Z_n\in\mathcal L_\eps}
  &\leq \frac{1}{n-1} \sum_{k=1}^{2^{L+b}-1} \sum_{\substack{r
      =-\infty \\r\neq 0}}^\infty
  f_n\bra{\frac{k}{2^{L+b}}-\frac{r}{n2^{L+b}},\,t} +\frac{2}{n-1}
  \sum_{r=1}^\infty f_n\bra{\frac{r}{n},\,t} \,, \label{eqn:two_sums}
\end{align}
where we have used the symmetry of the functions $f_n$ at either end
of the interval $[0,1]$ to rewrite the expression for the end
intervals.  Now replace $t$ by $T_n+c$, and consider the function
$f_n$ in the double sum above:
\begin{align}
  f_n\bra{\frac{k}{2^{L+b}}-\frac{r}{n2^{L+b}},\,T_n+c} &= (n-1)
  \prod_{j=1}^{T_n+c} \phi_n\bra{\frac{nk-r}{2^{L+b}},\,j}\leq (n-1)
  \phi_n\bra{\frac{nk-r}{2^{L+b}},\,T_n+c} \,. \label{eqn:prod}
\end{align}
Here we have bounded the product by a single term, once again making
use of the fact that $\phi_n$ takes values in $(0,1]$. Since
$\phi_n(s,j)$ involves $s$ and $j$ only through the function
$G_{\mebfussy\xi}(\omega_{s,j})$, where $\omega_{s,j} = \exp(2\pi i
2^j s/n)$, we have (for sufficiently large $n$) that the bound in
\eqref{eqn:prod} is a function of
\begin{align*}
  \exp\bra{2\pi i \frac{2^{T_n+c}}{n}\bra{\frac{nk-r}{2^{L+b}}}} =
  \exp\bra{-2\pi i \frac{2^{T_n+c}r}{n2^{L+b}}} =
  \exp\bra{-\frac{2^{1+c-(L+b)}\pi ir}{\sigma_{\mebfussy S}}}\,.
\end{align*}
The second equality simply uses the definition of $T_n$, while the
first results from shifting the argument of the exponential function
by $2\pi i k2^{T_n+c-(L+b)}$. (For large enough $n$ this is an integer
multiple of $2\pi i$, thanks to the finiteness of $L$ and $b$ and the
assumption that $T_n\to\infty$.)

Writing
\[ \theta_{nrc} = \frac{2^{1+c-(L+b)}\pi r}{\sigma_{\mebfussy
    S}}\,, \] (where recall that $\sigma_{\mebfussy S}$ depends on
$n$) we therefore need to upper bound the function
\[ \phi_n\bra{\frac{nk-r}{2^{L+b}},\,T_n+c} = \frac{p_n^2}{1-2(1-p_n)
  \Re\bra{G_{\mebfussy\xi}(e^{-i\theta_{nrc} })} + (1-p_n)^2
  \abs{G_{\mebfussy\xi}(e^{-i\theta_{nrc}})}^2}\,. \] Now note that
\[ G_{\mebfussy\xi}(e^{-i\theta_{nrc}}) =
\Ex{e^{-i\mebfussy\xi\theta_{nrc}}} \quad\text{ and thus }\quad
\Re\bra{G_{\mebfussy\xi}(e^{-i\theta_{nrc}})} =
\Ex{\cos\bra{\mebfussy\xi\theta_{nrc}}} \,. \] Similarly,
\[
\abs{G_{\mebfussy\xi}(e^{-i\theta_{nrc}})}^2 =
\Ex{\cos\bra{\mebfussy\xi\theta_{nrc}}}^2 +
\Ex{\sin\bra{\mebfussy\xi\theta_{nrc}}}^2 \,.
\]

Since $p_n\to 0$ as $n\to\infty$, we see from \eqref{eqn:var_S} that
$\sigma_{\mebfussy S}\to\infty$ and so
$\theta_{nrc}\to 0$.  Using the Taylor expansions of cosine and sine
the above can be approximated by
\begin{align*}
  \Ex{\cos\bra{\mebfussy\xi\theta_{nrc}}} &=
  1-\frac{(\mu^2+\sigma^2)\theta_{nrc}^2}{2} + O(\theta_{nrc}^4) \,;
  \quad \Ex{\sin\bra{\mebfussy\xi\theta_{nrc}}} = \mu\theta_{nrc} +
  O(\theta_{nrc}^3) \,.
\end{align*}
Neglecting terms of $O(\theta_{nrc}^3)$ we arrive at
\begin{align*}
  \phi_n\bra{\frac{nk-r}{2^{L+b}},\,T_n+c} &\sim
  \frac{p_n^2}{1-(1-p_n)\squ{2-(\mu^2+\sigma^2)\theta_{nrc}^2} +
    (1-p_n)^2\squ{1-(\mu^2+\sigma^2)\theta_{nrc}^2 +
      \mu^2\theta_{nrc}^2}}
  \\
  &= \frac{p_n^2}{p_n^2+(1-p_n)(\mu^2+\sigma^2p_n)\theta_{nrc}^2} =
  \frac{1}{1+ 4^{1+c-(L+b)}\pi^2r^2} \,.
\end{align*}

We now combine this bound with that in \eqref{eqn:prod} and insert
into \eqref{eqn:two_sums} (using an identical argument for the second
sum there):
\begin{align}
  \limsup_{n\to\infty}\Ex{f_n(Z_n,t);Z_n\in\mathcal L_\eps}& \leq
  \sum_{k=1}^{2^{L+b}-1} \sum_{\substack{r=-\infty\\r\neq0}}^\infty
  \frac{1}{1+ 4^{1+c-(L+b)}\pi^2r^2} + 2 \sum_{r=1}^\infty \frac{1}{1+
    4^{1+c-(L+b)}\pi^2r^2} \nonumber \\
  &= 2^{L+b}
  \bra{2^{L+b-(1+c)}\coth(2^{L+b-(1+c)})-1} \nonumber \\
  & \sim 2^{L+b}\,\frac{4^{L+b-(1+c)}}{3} \quad\text{as
    $c\to\infty$,} \label{eqn:final_bound}
\end{align}
where we have made use of the well-known identity
\citep[p. 334]{Apostol1974}
\[
\coth(x)=\frac{1}{x}+2x\sum_{r=1}^\infty\frac{1}{x^2+\pi^2r^2} \qquad
(x>0)\,.
\]
Combining \eqref{eqn:U_to_zero_off_lattice} and
\eqref{eqn:final_bound} yields the required result
\[ \limsup_{n\to\infty} U_n(c) = \limsup_{n\to\infty}
\Ex{f_n(Z_n,T_n+c);Z_n\in\mathcal L_\eps} \leq O(4^{-c}) \quad\text{as
  $c\to\infty$,} \] and thanks to the comment after
\eqref{eqn:phi_defn}, this completes the proof of part 2 of
Theorem~\ref{thm:Y_cutoff}.


\section{Concluding remarks}\label{sec:conclusions}

We have shown that the subsampled process $Y$ exhibits a
cutoff when the probability $p_n$ of jumping takes the form
$p_n=1/(2n^\alpha)$, for a range of $\alpha$ which depends upon the
mean of our step distribution ($\alpha\in(0,2)$ when $\mu=0$, and
$\alpha\in(0,1)$ otherwise). Furthermore, our original chain of
interest $X$ exhibits a pre-cutoff, with mixing time $\tmix{(n)}(\eps)
= \Omega(n^\alpha \ln n)$.

We have not yet said much about the mixing time of either process when
$\alpha$ takes values on the boundary of these intervals, however. If
$\alpha=0$ then part 1 of Theorems~\ref{thm:Y_cutoff} and
\ref{thm:main_result} (which do not rely on $p_n$ tending to zero)
still hold; however, our argument for upper bounding the mixing time
of $Y$ (and hence of $X$) breaks down, since a sufficiently fine
lattice $\mathcal L$ does not exist. (An upper bound of $O(\ln n
\ln\ln n)$ can be obtained for the mixing time of $X$ by employing the
method of \cite{Chung1987}.) On the other hand, if $\alpha$ takes the
value at the upper boundary of the relevant interval then
$n/\sigma_{\mebfussy S} = O(1)$, and thus $T_n$ is asymptotically
independent of $n$: in this case it is relatively easy to show that
$Y$ mixes in constant time (and so no longer exhibits a cutoff), and
that $X$ has mixing time of $\Omega(n^\alpha)$.

It is of course possible to generalise the process considered in this
paper in a number of ways. For example, changing the form of $p_n$ to
$1/(\beta n^\alpha)$ for some constant $\beta >1$ has no effect on the
cutoff result for $Y$. Similarly, changing the transitions of $X$ so
that jumps involve multiplying by some (fixed) $k\geq 2$ (and
considering only those $n$ for which the resulting process still has a
uniform equilibrium distribution) presumably has the effect of
changing the base of the logarithm in the cutoff time $T_n$ for $Y$
from 2 to $k$; Theorem~\ref{thm:main_result} should also still hold,
with the factor of $\ln2$ in $T_n^L$ being replaced by $(\ln
k)/(k-1)$. More interesting would be an analysis of a process $X$ for
which the multiplication factor is not deterministic, and for which
the resulting subsampled chain $Y$ does not have a distribution given
by simple convolution; for example where jumps in $X$ correspond to
multiplication by $a_t$ (again with probability $p_n\to 0$), with
$a_t$ being uniformly chosen from the set $\set{2,(n+1)/2}$.

\section*{Acknowledgements}

Some of the ideas in this work arose during investigations into random
walks on $\ZZ_n$ by Sam Wright, who was supported by Nuffield Science
Undergraduate Research Bursary URB/40605. The authors would also like
to express their gratitude to John Payne, whose numerical calculations
for a particular instance of our process provided some useful early
insights into the behaviour of the upper bound for $Y$ in
Section~\ref{sec:upper bound}.


\bibliographystyle{chicago} \bibliography{rings}

\vspace{1cm}
\noindent
Department of Mathematics\\ University of York\\ York, YO10 5DD\\ UK

\end{document}